\newcommand{\club}{\clubsuit}
\newcommand{\spade}{\spadesuit}
\newcommand{\heart}{\color{red}\varheartsuit\color{black}}
\renewcommand{\diamond}{\color{red}\vardiamondsuit\color{black}}
\newtheorem{definition}{Definition}[section]
\newtheorem{proposition}{Proposition}[section]
\newtheorem{lemma}{Lemma}[section]
\title{\bf MinDist is less than 7}
\author{
Purushottam Saha\thanks{Corresponding author: \texttt{puru8017@gmail.com}} \\
Indian Statistical Institute, Kolkata
\and
Diganta Mukherjee \\
Indian Statistical Institute, Kolkata
}
\date{} % arXiv prefers empty date
\begin{document}
\maketitle

\begin{abstract}
The metric MinDist, introduced recently to quantify the distance of an arbitrary Rummy hand from a valid declaration, plays a central role in algorithmic hand evaluation and optimal play. Existing results show that the MinDist of any $13$-card Rummy hand from a single deck is bounded above by $9$. In this paper, we sharpen this bound and prove that the MinDist of any hand is at most $7$. We further show that this bound is tight by explicitly exhibiting a hand whose MinDist equals $7$ for a suitable choice of wildcard joker. The proof combines elementary combinatorial arguments with structural properties of card partitions across suits and resolves the gap between the previously known upper bound and the true extremal value.
\end{abstract}

\section{Introduction}

Rummy is a widely played card game in which a player aims to partition a hand of $13$ cards into valid \emph{melds}, consisting of sequences and sets, subject to the presence of a designated wildcard joker. Beyond its recreational importance, the game admits a rich combinatorial structure that has recently motivated formal mathematical and simulation based analysis \cite{Megarry1975RummyReview,Finkle2017Rummy}, particularly in the context of hand evaluation and algorithmic decision-making\cite{SahaRummyMetric2025}.

A central concept in this direction is the metric \emph{MinDist}\cite{SahaRummyMetric2025}, which measures how far a given hand is from being declarable, quantified as the minimum number of card replacements required to transform the hand into a valid declaration. This metric provides a principled way to rank hands, design heuristics for automated play, and analyze the difficulty of achieving a valid configuration under partial information.
Previous work established that the MinDist of any Rummy hand is bounded above by $9$, a simple result obtained via general counting arguments \cite{SahaRummyMetric2025}. While this bound suffices for algorithmic guarantees, empirical evidence shows the number is between 2 to 4 for a randomly generated hand with high probability. Through our extensive adversarial search, we could only find hands with maximal possible MinDist of $7$. This leaves open the natural question: \emph{what is the smallest upper bound on MinDist}? Determining the exact value of this bound is not only of theoretical interest but also clarifies the worst-case behavior of hand configurations.

The purpose of this short paper is to resolve this question. We show that the MinDist of any $13$-card Rummy hand is at most $7$, thereby improving the known bound by two. Moreover, we demonstrate that this bound is tight by explicitly constructing a hand (and an wild-card joker) that attains MinDist $7$. Our arguments rely on elementary combinatorial reasoning about suit distributions, gaps between card values, and forced structural constraints, supplemented by limited computational verification in a (comparatively) small residual case.
The results presented here close the gap between known upper bounds and achievable extremal configurations, and provide a precise characterization of the maximal distance of a Rummy hand from a valid declaration.

\section{Main Results}\label{sec-2}
The game of Rummy is played with a single deck (cards $A\;2\;...\;10\;J\;Q\;K$ for each of the four suits) and a printed joker, 13 cards are distributed to each player, and a separate card is considered the wild-card joker. A pile is set up by placing the top most card from the remaining deck. The game continues by each player taking turn to draw a card from the closed deck (face down) or the pile (face up, but only last dropped card is allowed to be picked), and drop a card (the card chosen from pile is not allowed to be dropped on the same turn) to the pile, till one player declares their hand as a partition of valid melds. Each valid meld is one of the following:\\
\begin{itemize}
    \item Pure Sequence: (at least 3) Cards of same suit, which are consecutive in $A\;2\;3\;4\;5\;6\;7\;8\;9\;10\;J\;Q\;K\;A$ order, e.g. $3\heart\;4\heart\;5\heart$ is a valid Pure Sequence, but 
    \item Impure Sequence: Same as a Pure Sequence, though one or more cards can be replaced by a joker card.
    \item Pure Set: (at least 3) Cards of same value, and different suits.
    \item Impure Set: Same as a Pure Set, though one or more cards can be replaced by a joker card.
\end{itemize}
A criteria that is often imposed on a valid declaration, is that at least one of the melds need to be a Pure Sequence, and another one of the melds need to be an Pure/Impure Sequence.

The MinDist metric, introduced in \cite{SahaRummyMetric2025}, quantifies how far a given hand is from being declarable.
\begin{definition}
    MinDist of a given hand $h$, for a given wild-card joker $wcj$, is defined as $$\emph{MinDist}(h,wcj) = \min_{h'\in declarable\;hands}d_{wcj}(h,h'),$$ where $d_{wcj}(h,h')$  is the distance between 2 hands $h$ and $h'$, i.e. the minimum number of cards that needs to be replaced of $h$, to create $h'$.
\end{definition}
This metric may also be interpreted as the minimum number of successful card replacements required to reach a declarable hand, making it a natural guide for algorithmic play and hand evaluation\cite{SahaRummyMetric2025}.\\

A general upper bound was established in \cite{SahaRummyMetric2025}.
\begin{proposition}\label{prop-1}
MinDist of any given hand is less than $9$.
\end{proposition}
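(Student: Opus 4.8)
The plan is to prove the equivalent bound $\mathrm{MinDist}(h,wcj)\le 8$, that is, that any $13$-card hand can be transformed into a valid declaration by replacing at most $8$ cards, equivalently by \emph{retaining} at least $5$. Because $d_{wcj}$ counts the number of replaced cards, it suffices to produce, for an arbitrary hand $h$, a declarable hand $h'$ sharing at least $5$ cards with $h$; in other words, to choose $5$ cards of $h$ and embed them into some valid $13$-card declaration, the other $8$ positions being filled by freely chosen cards.

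The first ingredient I would record is a simple compatibility fact: two real cards can be placed in a common meld exactly when they share a value (placing them in a set) or share a suit (placing them inside a single run). Crucially, since we play from a full single deck, every value of a given suit is available, so the gaps of a same-suit pair can be filled by \emph{real} cards, producing a genuine \emph{pure} sequence with no joker. Consequently two same-suit cards can always be absorbed into a pure sequence, and the twin requirements that a declaration contain a pure sequence and a second (pure or impure) sequence can be met for free using real filler cards.

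Next I would apply pigeonhole to the suits: since $13>4\cdot 3$, some suit $S$ contains at least $\lceil 13/4\rceil = 4$ cards of $h$. Splitting these four suit-$S$ cards into two short runs and completing each with real fillers yields two pure sequences, which simultaneously discharge both sequence requirements and already retain $4$ cards of $h$. A fifth card is then retained from a different suit (which exists unless all $13$ cards lie in $S$, a degenerate situation handled directly) by placing it in its own meld, either a set completed by same-value cards of other suits or a short pure sequence in its own suit; the remaining positions are padded with arbitrary valid melds.

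The main obstacle is purely the \emph{packing arithmetic}: the meld sizes must all be $\ge 3$ and sum to exactly $13$ while accommodating the chosen five cards, and the lengths of the two suit-$S$ runs are dictated by the value gaps of those four cards. I would resolve this by a short case analysis on the gaps of the sorted suit-$S$ values $a_1<a_2<a_3<a_4$: splitting at the largest of the three gaps $g$ makes the two covering runs have combined length $(a_4-a_1)-g+2$, and since the three gaps sum to $a_4-a_1\le 12$ we have $g\ge (a_4-a_1)/3$, whence this combined length is at most $10$. Thus at least $3$ positions remain for a further meld holding the fifth retained card, and $13$ decomposes into valid meld sizes in every case (adjoining a real filler to any run of length below $3$). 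Verifying this bookkeeping, together with the degenerate all-one-suit case, completes the proof.
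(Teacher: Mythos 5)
Your proposal is correct in substance but takes a genuinely different --- and in fact stronger --- route than the paper's. The paper's proof begins with the same pigeonhole observation (some suit $S$ holds at least $\lceil 13/4\rceil = 4$ cards), but then simply replaces all $9$ remaining cards to complete suit $S$ entirely, splitting the full $13$-card suit into three pure sequences (e.g.\ $2\,3\,4$, $5\,6\,7$, and $8$ through $A$); this retains only $4$ cards and so literally establishes $\mathrm{MinDist}\le 9$, matching the proposition only under the paper's loose reading of ``less than $9$'' (the same convention appears in its next proposition, stated as ``less than $8$'' but proved as ``at most $8$''). You instead retain $5$ cards --- the four suit-$S$ cards split into two runs at the largest gap, plus a fifth card in its own meld --- and your packing estimate $(a_4-a_1)-g+2\le 10$ is sound, so you prove the literal bound $\mathrm{MinDist}\le 8$; amusingly, that is exactly the content of the paper's \emph{next} proposition, which it obtains by a different pigeonhole (13 cards among the 12 non-joker values force a repeated value, seeding a $4$-card set plus three sequences each anchored by one retained card). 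Two pieces of bookkeeping in your sketch deserve an explicit line each, since your length formula counts spans before padding: when the largest gap is tied you must split judiciously (for $A,5,9,K$, splitting at the middle gap gives runs of lengths $5+5=10$, whereas splitting at the last gap yields a padded $9+3=12$, leaving only one free position); and when the four cards are consecutive (e.g.\ $A,2,3,4$) the two padded runs can collide or fall off the end of the non-circular order $A\,2\,\ldots\,K\,A$, so you should merge them into a single run of length $4$ and discharge the second-sequence requirement with a disjoint all-filler pure sequence. With those amendments your argument closes in every case, and it buys a sharper conclusion than the paper's own one-line proof, at the cost of a short case analysis the paper defers to its later, differently-structured argument.
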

\begin{proof}
   We present an alternate proof of the result. As there are $4$ suits and $13$ total cards dealt, by simple PHP arguments there is at least one suit with $4$ cards. If we can replace (at most) all $9$ other cards to get the complete suit, we can create two pure sequences, say, $2\;3\;4$ and $5\;6\;7$ (of that suit) and all the rest cards ($8$ to $A$) as another large pure sequence, hence a valid meld partition. So at most $9$ cards are required to be replaced, hence MinDist of any hand is less than $9$. 
\end{proof}

We now strengthen the bound, in the following propositions.
\begin{proposition}\label{prop-2}
MinDist of any given hand is less than $8$.
\end{proposition}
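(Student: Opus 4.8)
The plan is to improve the pigeonhole construction of Proposition \ref{prop-1} from \emph{keeping} four cards to keeping five: since replacing $k$ cards is the same as retaining $13-k$, any valid declaration that agrees with the hand in five cards certifies MinDist $<8$. I would first dispose of the easy suit distributions by a simple count. If some suit already contains at least five cards, I complete that suit exactly as in Proposition \ref{prop-1}, keeping those (at least) five cards and replacing the remaining (at most) eight, splitting the completed suit into pure sequences. Hence I may assume every suit has at most four cards; since $13 > 3\cdot 4$, at least one suit must contain exactly four. Fix such a suit $S$.

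The heart of the argument is a second, rank-level pigeonhole applied to $S$. Its four cards occupy four of the thirteen ranks $A,2,\dots,K$, so two of them differ in rank by at most $\lfloor 12/3\rfloor = 4$. Let $\delta \le 4$ be this minimal gap, and observe that the shortest pure sequence in $S$ containing such a pair has length $\max(3,\delta+1)\le 5$. When $\delta \le 3$ this sequence has length at most $4$ and already retains two cards of $S$; the remaining $\ge 9$ target cards can then be partitioned into three further melds, each of size at least $3$ and each anchored on one additional surviving original card (extended to a short pure sequence or a set using fresh cards). This produces four melds, retains $2+3=5$ original cards, and replaces exactly eight; the dominant sequence is pure and at least one anchor meld may be chosen to be a sequence, so the declaration is valid.

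The only residual possibility, and the step I expect to be the genuine obstacle, is $\delta = 4$. Here the three gaps among the four ranks of $S$ must each equal $4$, forcing the cards to be exactly the ranks $A,5,9,K$ of $S$; now no single short window captures all four, so the length-$(\delta+1)$ trick above would be too costly. Instead I would cover the four cards by the two disjoint length-$5$ pure sequences $A\,2\,3\,4\,5$ and $9\,10\,J\,Q\,K$ of $S$, retaining all four, and adjoin one size-$3$ meld anchored on a fifth surviving card. Again five originals are kept and eight cards replaced, with two pure sequences satisfying the declaration constraints. The remaining work is pure bookkeeping: checking in each branch that the meld sizes sum to $13$ with every part $\ge 3$, that the mandatory pure/second-sequence requirement holds, and that the fresh filler cards can be chosen distinct within a single deck. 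Collecting the cases yields a valid declaration reachable in at most eight replacements, i.e.\ MinDist $<8$.
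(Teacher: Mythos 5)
Your argument is correct (at the same level of rigor as the paper's own, which likewise defers the "fresh cards can be chosen without collision" bookkeeping), but it reaches the bound by a genuinely different route. The paper's proof is driven by the wildcard joker: discounting the joker's value leaves $12$ usable values for $13$ cards, so two cards share a value; these seed a set of size $4$, three further cards each seed a sequence, and $2+3=5$ originals are retained. You never touch the wildcard. Instead you run a double pigeonhole --- first over suits to isolate a suit with exactly four cards (after dispatching the $\ge 5$-card case by completing the suit as in Proposition~\ref{prop-1}), then over ranks within that suit to find a pair at rank-difference at most $4$ --- and you seed a short pure sequence from that close pair, resolving the unique extremal configuration $A,5,9,K$ by covering it with the two disjoint length-$5$ sequences $A\,2\,3\,4\,5$ and $9\,10\,J\,Q\,K$. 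Both proofs ultimately retain five cards, but they buy different things: the paper's wildcard trick produces the needed collision with no case analysis at all, while your gap-based argument is independent of the wildcard convention (so it survives rule variants without a joker) and is much closer in spirit to Lemma~\ref{lemma1} and the gap analysis that the paper later uses to push the bound down to $7$ --- in that sense your proof is a natural stepping stone to the main theorem rather than a detour through sets.
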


\begin{proof} 
Let us first note that, amongst the $13$ different value of cards (across $4$ suits), one of them is assigned as a wild-card joker, which can be used as any other card, hence w.l.o.g. we can assume we do not obtain a card which is a wild-card joker. Hence, as there are now $12$ possible values, and $13$ cards dealt, there must be $2$ cards which has the same value. We wish to form a valid set by expecting to receive the two other cards of the same value from the other suits (hence we are utilizing $2$ of the given $13$ cards). As we are already forming a $4$-length set, we need to arrange $9$ more cards, and that can be done by $3$ sequences\footnote{one of them must be pure by requirement, but we can always make them pure in these calculations as we can obtain any card we like to replace our cards}, guided by any $3$ cards from the rest of the hand (and finding cards near those). Hence, we are able to utilize at least $2$ (from the set) + $3$ (from $3$ sequences) $= 5$ cards from our hand of size $13$, which means replacing at most $13-5 = 8$ cards, we can turn any hand of rummy into a complete hand, i.e. MinDist of any given hand is at most $8$.
\end{proof}

To proceed further, we introduce a notion of distance within a suit.
\begin{definition}
    Distance between two distinct cards of same suit (also referred as 'gap') is the minimum number of cards between the cards in the order $A\;2\;3\;4\;5\;6\;7\;8\;9\;10\;J\;Q\;K\;A$. Observe that this is also the minimum number of cards required to create a valid sequence involving the given cards, if the gap is at least $1$. For example, gap between $2$ and $3$ is $0$, but gap between $J$ and $K$ is $1$.
\end{definition}
The following lemma captures a key structural constraint.
\begin{lemma}\label{lemma1}
    If a suit has 4 or more cards (of 11 possibilities for values), then the maximum possible minimum distance between any 2 cards is 2.
\end{lemma}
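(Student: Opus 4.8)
The plan is to recast the statement as a pigeonhole/span argument on the linear value order $A\,2\,3\,4\,5\,6\,7\,8\,9\,10\,J\,Q\,K\,A$. First I would encode each non-ace value by its position, so that $2\mapsto 2,\,3\mapsto 3,\,\dots,\,K\mapsto 13$, and record that the ace is the one special value: it may be read either as the low endpoint (position $1$) or as the high endpoint (position $14$), and its gap to any other card is the smaller of the two resulting values. Under this encoding the gap between cards $x,y$ equals $|\mathrm{pos}(x)-\mathrm{pos}(y)|-1$, so that a gap of at least $3$ between two cards is the same as their positions differing by at least $4$. I would also record the two basic facts $\mathrm{gap}(A,2)=0$ and $\mathrm{gap}(A,K)=0$, which encode that the ace is adjacent to both $2$ and $K$.

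For the upper bound I would argue by contradiction, assuming four cards all pairwise at gap $\ge 3$ (hence four positions with pairwise differences $\ge 4$), and split on whether the ace is among them. If the ace is absent, the four cards occupy the interior positions $2,\dots,13$; but four positions with all three consecutive differences $\ge 4$ need a span of at least $12$, i.e.\ a window of $13$ positions, which the $12$ interior slots cannot provide --- a contradiction. If the ace is present, then using $\mathrm{pos}(A)\in\{1,14\}$ one checks that $A$ lies within gap $2$ of every card in $\{2,3,4\}$ and in $\{J,Q,K\}$, so the remaining three cards must all lie in $\{5,6,7,8,9,10\}$; but these six positions admit at most two cards that are pairwise $\ge 4$ apart, giving at most $1+2=3$ cards in total --- again a contradiction. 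Either way some pair has gap at most $2$, which establishes the bound.

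For tightness I would exhibit an explicit four-card suit realizing minimum gap exactly $2$, for instance $\{2,5,8,J\}$ at positions $2,5,8,11$, whose three consecutive gaps all equal $2$ and whose remaining gaps are strictly larger. I expect the genuine obstacle to be the careful bookkeeping for the ace rather than the counting: because the ace sits at both ends of the order, a naive span count would wrongly license four equally spaced cards of gap $3$, such as $A,5,9,K$ (positions $1,5,9,13$), and the argument only closes once one observes that placing the ace at an extreme slot forces a gap-$0$ pair with its neighbour $2$ or $K$. Isolating and neutralizing this wraparound is where the real content of the lemma lies.
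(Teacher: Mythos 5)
Your proof is correct, but it travels a genuinely different route from the paper's. The paper closes the argument with a single circular pigeonhole count: it first \emph{relaxes} the rules by allowing wraparound sequences (so the $13$ values sit on a cycle), and then observes that four cards whose four arcs each contain at least $3$ empty values would require $4\cdot 3+4=16>13$ slots. You instead keep the order linear, pin the non-ace values to positions $2,\dots,13$, let the ace float between positions $1$ and $14$, and run a span count in two cases (ace absent: four positions pairwise $\ge 4$ apart need span $\ge 12$, impossible inside $\{2,\dots,13\}$; ace present: the companions are forced into $\{5,\dots,10\}$, which holds at most two pairwise-far cards). The paper's version is shorter, but your version is more faithful to the meld rules as stated: the circular relaxation could in principle certify a ``close'' pair such as $K$ and $2$, whose witness $K\,A\,2$ is not a legal sequence in the written order $A\,2\,\dots\,K\,A$, whereas the pair your argument produces always lies inside a genuine consecutive run and can therefore actually seed the length-$4$ sequence used in the main proposition. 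You also correctly isolate the one configuration a naive linear count would miss ($A,5,9,K$, killed by $\mathrm{gap}(A,K)=0$) --- the paper absorbs this silently into the cycle --- and you supply a tightness witness $\{2,5,8,J\}$, which the paper's proof omits even though the lemma is phrased as an exact maximum.
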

\begin{proof}
    Allow circular sequences (i.e. $K\spade A\spade 2\spade$ is also now a valid sequence), which can only reduce gaps. If the minimum gap between cards were at most $3$, then $4$ gaps along with $4$ cards need to be placed, which will take at least $12+4=16$ distinct possible values. Hence the largest possible minimum gap between cards is $2$. Hence the result follows.
\end{proof}

We now arrive at the main result.
\begin{proposition}
    \begin{enumerate}
        \item[(a)] MinDist of any given hand of Rummy is at most 7.
        \item[(b)] This bound is tight, as an example of a hand with MinDist = 7 is: $2\heart\;3\club\;4\spade\;5\diamond\;6\heart\;7\club\;8\spade\;9\diamond\;10\heart\;J\club\;Q\spade\;K\spade\; K\diamond$, with $A\heart$ as wildcard joker.
    \end{enumerate}
\end{proposition}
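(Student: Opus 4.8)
The plan is to recast both parts in terms of \emph{retained} cards: since any declaration occupies exactly $13$ positions, writing $13 = (\text{kept}) + (\text{waste})$, part (a) is equivalent to exhibiting a valid declaration that keeps at least $6$ of the given cards (equivalently, has total padding ``waste'' at most $7$), while part (b) amounts to showing that the displayed hand admits no declaration keeping $7$. As in the proof of Proposition \ref{prop-2}, I would first reduce to the case where no card of the hand has the joker rank (such cards only help), leaving $12$ usable values among $13$ cards, so by pigeonhole some value is repeated and, by the same counting, some suit $S$ contains at least $4$ cards.

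For part (a) the building blocks are cheap melds: a repeated value gives a \emph{set} keeping $2$ cards in size $3$ (waste $1$), and, by Lemma \ref{lemma1}, the $4$-card suit $S$ contains a pair of gap at most $2$, giving a \emph{sequence} keeping $2$ cards in size at most $4$. If one such set and one such sequence are \emph{disjoint}, I would adjoin two length-$3$ melds each keeping one further isolated card, obtaining a declaration of total size at most $4+3+3+3=13$ that keeps $6$, the pure-sequence requirement being met by making the fillers pure sequences. I would then organize the remaining work by the size $a$ of the largest suit. When $a\ge 6$, a single run through all cards of that suit has size equal to its span, which is at most $13$; splitting it into two pure sequences already keeps at least $6$. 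When $a=5$, a short counting argument (five cards force two disjoint same-suit pairs of small total gap) yields $4$ kept cards cheaply, and one more pair or the repeated value supplies the rest.

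The delicate regime is $a=4$: distributions with two $4$-card suits are handled by the two resulting close pairs, so the genuinely tight case is $(4,3,3,3)$, where the cheap set and the cheap sequence can be forced to \emph{share} a card (exactly as in the extremal hand, where the repeated value $K$ sits on the only close pair $\{Q\spade,K\spade\}$). This overlap is the main obstacle, since naive pairing then overshoots $13$ positions. Here I would instead select three \emph{disjoint} pairs---mixing same-value pairs and same-suit pairs of gap up to $3$---whose melds together occupy at most $13$ positions and keep $6$; the extremal hand itself shows such a triple exists, e.g.\ $\{Q\spade,K\spade\}$, $\{4\spade,8\spade\}$, $\{2\heart,6\heart\}$ with sizes $3+5+5=13$. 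Verifying that a fitting triple can always be chosen across all spread-out $(4,3,3,3)$ configurations is where a bounded case analysis---reduced to a small finite family by suit/value symmetry and then checked computationally---appears unavoidable.

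For part (b) I would prove the matching lower bound $\mathrm{MinDist}\ge 7$ by showing the displayed hand keeps at most $6$. The key structural fact is that its cards are almost entirely \emph{isolated}: every same-suit pair has gap $3$ except $\{Q\spade,K\spade\}$ (gap $0$), and the only repeated value is $K$, namely $\{K\spade,K\diamond\}$. Hence the only melds of waste $1$ are the sequence $\{Q\spade,K\spade\}$ and the set $\{K\spade,K\diamond\}$, and these share $K\spade$, so at most one of them can be used; every other card-retaining meld has waste at least $2$ (a lone card) or at least $3$ (a gap-$3$ pair, or a three-card run). Keeping $7$ cards forces total waste exactly $6$, and a short waste count shows this cannot be achieved with only one waste-$1$ meld available. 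Therefore the hand keeps at most $6$, and combined with part (a) its MinDist equals exactly $7$.
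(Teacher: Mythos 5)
For part (a) your route is essentially the paper's: the same reduction to a repeated value (a cheap set) plus a gap-$\le 2$ same-suit pair from Lemma~\ref{lemma1} (a cheap sequence), two one-card filler sequences to reach $13$ positions while retaining $6$ cards, and the same residual spread-out case --- your $(4,3,3,3)$-with-overlap configuration is exactly the paper's $3$-$3$-$3$-$2$ split of the $11$ cards outside the set pair --- resolved by the same device of three disjoint cheap pairs giving melds of sizes $3+5+5=13$, verified over a bounded family by computer. Note that the paper sidesteps your overlap worry in the generic case by applying Lemma~\ref{lemma1} only to the $11$ cards left after the set pair is removed, so disjointness is automatic there; the overlap only ever bites in the residual case, which both you and the paper delegate to computation.

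For part (b) you genuinely diverge: the paper verifies the lower bound by running the MinDist program on the exhibited hand, whereas you propose a manual waste count. That would be a real improvement, but as written it has a gap. Counting waste-$1$ melds is not enough, because the hand also admits melds retaining cards at only \emph{one} wasted position per retained card: the run $8\spade\,9\spade\,10\spade\,J\spade\,Q\spade\,K\spade$ keeps three spades in six positions (waste $3$), and the four-King set keeps $K\spade, K\diamond$ in four positions (waste $2$). With one waste-$1$ meld keeping $2$ and five further kept cards at one wasted position each, the total waste would be exactly $1+5=6$, which does not contradict keeping $7$. The repair is to observe that \emph{every} meld whose waste is at most its number of retained cards must contain the physical card $K\spade$ (the $Q\spade K\spade$ runs, the King sets keeping two, and the long spade runs all do), hence at most one such meld occurs in any declaration; every meld avoiding $K\spade$ wastes at least $\tfrac{3}{2}$ positions per retained card (gap-$3$ pairs need length-$5$ runs, lone cards need length-$3$ melds, and the remaining same-suit triples span $9$). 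Writing $(k_0,w_0)$ for the meld containing $K\spade$, the total waste is at least $w_0+\tfrac{3}{2}(7-k_0)$, and checking the few possibilities $(k_0,w_0)\in\{(2,1),(2,2),(3,3),(4,6)\}$ (or $K\spade$ discarded) shows this always exceeds $6$. With that addition your argument is correct and yields a computer-free proof of tightness, which the paper does not provide.
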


\begin{proof}
    We start as with the last proof, that we observe 2 cards with same values but of different suit, and wish to form a valid set out of the cards. Considering the rest 11 cards, we observe that they come from 11 different possible values, and if any two cards are of same value, then we are done, as we would use them to form another set of size 3, (together with another set that was created earlier, the case where we utilize 3 more cards to form 3 sequences, hence utilizing at least 7 cards in total, i.e. MinDist bound becomes 6), hence we assume w.l.o.g. the cards are all of different values. 

    Lemma ~\ref{lemma1} shows that if there are 4 cards in a suit, then we will obtain two cards with at most gap 2. We can use them to create a sequence of length 4, together with the past valid set formation (we can use one of length 3), we have used 2+2 cards to create valid melds. Alongside, to fill the rest 6 card gap, we can utilize 2 more cards to create 2 more valid sequences, hence we can use 6 of our 13 cards, which proves the maximum possible MinDist is 7 for this case.

    The only other part left is if the partition of the rest 11 cards is done by the lengths 3-3-3-2. Till now, we have not obtained a clear mathematical proof for this case, but the cases are now fairly easy to check with a computer. \\
    We start with a set of 1 to 12, and partition it into lengths of 3-3-3-2 with leaving 1 card, which will be the card that we have the same value of for 2 suits. If we can show a result similar to Lemma ~\ref{lemma1}, that maximum possible minimum distance between 2 cards is at most 3 for these partitions, and there are at least two such pairs with minimum distance 3 between each pair, then we are done, as the rest of the proof follows from the fact that those pairs can be utilized to create two 5-long sequences, and a 3-long set, by utilizing a total of $2+2+2=6$ cards from our hand of 13. The total count of all such partitions are less than $\binom{12}{1}\times \binom{11}{2}\times \binom{9}{3}\times \binom{6}{3}\times \binom{3}{3} = 1108800$, which is fairly easy to loop over with a fairly fast computer program. The code used to check all the cases can be found \href{https://github.com/purushottam-saha/MinDist-less-than-7}{here}.

    The result mentioned actually is a tight one. The hand given as example is exactly such a hand, i.e. distances between any 2 cards of the same suit, (if the two $K$ are ignored, o.w. one can proceed with $Q\spade K\spade$, and replace the $Q$ by the $K$), is at least 3, and there are more than 2 pairs which satisfy this. A target valid declaration for the hand is $2\heart\;\usym{1F0A0}\;\usym{1F0A0}\;\usym{1F0A0}\;6\heart\;\;3\club\;\usym{1F0A0}\;\usym{1F0A0}\;\usym{1F0A0}\;7\club\;\;K\spade\;K\diamond\;\usym{1F0A0}$ , from which it is obvious that the MinDist is at most 7, and to show that the hand achieves MinDist of exactly 7, we use the computer program given in \cite{SahaRummyMetric2025}, which computes the MinDist and confirms the claim. 
    
\end{proof}

\section{Conclusion}

In this paper, we improved the previously known universal upper bound on the MinDist of a Rummy hand from $9$ to $7$. Using combinatorial arguments based on suit distributions, gap constraints, and meld construction, we showed that no hand can be farther than seven card replacements from a valid declaration. We further established the tightness of this bound by presenting an explicit hand whose MinDist equals $7$ for a suitable wildcard joker.

This result completes the extremal analysis of the MinDist metric and settles an open gap in earlier work. Beyond its theoretical significance, the exact bound has practical implications for algorithmic hand evaluation, heuristic design, and worst-case analysis in automated Rummy play. Future work may explore analogous distance metrics under rule variations, multiple jokers, or alternative meld constraints, as well as tighter structural characterizations of extremal hands.
% -------------------------------------------------
% References
% -------------------------------------------------
\bibliographystyle{abbrv}

\end{document}